\documentclass[12pt, reqno]{amsart}
\usepackage{amsmath, amsthm, amscd, amsfonts, amssymb, graphicx, color}
\usepackage[bookmarksnumbered, colorlinks, plainpages]{hyperref}

\newtheorem{theorem}{Theorem}[section]

\theoremstyle{definition}
\newtheorem{definition}[theorem]{Definition}

\theoremstyle{remark}

\numberwithin{equation}{section}

\begin{document}
	
	\setcounter{page}{1}

	\title[Mersenne-Lerch Interpolation Values]{Mersenne-Lerch Interpolation Values and
		Apostol-Mersenne Polynomial Families}
	
	\author[N. Lacpao]{Noel B. Lacpao}
	
	\address{Department of Mathematics, Bukidnon State University, Malaybalay City, Philippines.}
	\email{\textcolor[rgb]{0.00,0.00,0.84}{noel.lacpao@buksu.edu.ph}}
	
\begin{abstract}
	This paper introduces Bernoulli-type and Euler-type Mersenne-Lerch
	interpolation families associated with Apostol-Mersenne polynomial families.
	Their construction is based on the Mersenne translation polynomials
	\(P_{n,M}(x;m)\), defined by
	\[
	e_M^{xt}(e_M^t)^m
	=
	\sum_{n=0}^{\infty}P_{n,M}(x;m)\frac{t^n}{M_n!}.
	\]
	Explicit formulas for these polynomials are derived, including an expansion
	in terms of M-Stirling numbers of the second kind. At nonpositive integers,
	the resulting interpolation values recover the
	Apostol-Mersenne-Bernoulli and Apostol-Mersenne-Euler polynomials of
	order \(r\). Derivative, integral, addition, and difference formulas are
	also obtained for these values. A comparison with \(q\)-calculus shows that the M-factorial, M-binomial
	coefficient, M-derivative, and M-exponential are obtained by setting
	\(q=2\) in the corresponding \(q\)-calculus expressions.
	\medskip
	
	\noindent
	\textit{Keywords.}
	Mersenne translation polynomials, M-Stirling numbers,
	Apostol-Mersenne-Bernoulli polynomials,
	Apostol-Mersenne-Euler polynomials, interpolation values
	
	\smallskip
	
	\noindent
	\textit{2020 Mathematics Subject Classification.}
	Primary 11B68; Secondary 11B73, 05A15
	
\end{abstract}
	
	\maketitle
	
	\section{Introduction}
	
	Bernoulli and Euler polynomials occur in finite differences, interpolation
	formulas, special values of zeta functions, and series expansions. They are
	usually introduced by the generating functions
	\[
	\frac{t e^{xt}}{e^t-1}
	=
	\sum_{n=0}^{\infty}B_n(x)\frac{t^n}{n!},
	\qquad
	\frac{2e^{xt}}{e^t+1}
	=
	\sum_{n=0}^{\infty}E_n(x)\frac{t^n}{n!}.
	\]
	Apostol introduced a parameter \(\lambda\) into the Bernoulli generating
	function in his study of the Lerch zeta function \cite{Apostol1951}. This led
	to the Apostol-Bernoulli polynomials and later to Apostol-type Euler
	polynomials and their higher-order forms. Luo and Srivastava studied these
	families and connected them with the Hurwitz-Lerch zeta function
	\cite{LuoSrivastava2005,LuoSrivastava2006}.
	
	A different construction replaces the ordinary factorials and derivatives
	with analogues based on the Mersenne numbers
	\[
	M_n=2^n-1.
	\]
	Suna and Ray studied the Mersenne-Bernoulli and Mersenne-Euler polynomials
	and derived addition, difference, derivative, integral, convolution, and
	matrix identities for these families \cite{SunaRay2026Mersenne}. In a
	separate work, they introduced higher-order Apostol-Mersenne-Bernoulli and
	Apostol-Mersenne-Euler polynomials \cite{SunaRay2026}. Their constructions
	use the M-factorial \(M_n!\), the M-binomial coefficient
	\begin{equation}\label{Mbinomialcoeffi}
		\binom{n}{k}_{M}
		=
		\frac{M_n!}{M_k!M_{n-k}!},
	\end{equation}
	and the M-exponential function
	\begin{equation}\label{Mexponentialcoeffi}
		e_M^{xt}
		=
		\sum_{n=0}^{\infty}\frac{x^nt^n}{M_n!}.
	\end{equation}
	The present construction recovers the higher-order Apostol-Mersenne
	polynomial families from interpolation values at nonpositive integers.
	
	Related polynomial families have also been studied in the setting of
	poly-Bernoulli, poly-Euler, and poly-Cauchy numbers. Hurwitz-Lerch-type
	multi-poly-Cauchy numbers were defined through a multiple polylogarithm
	factorial function in \cite{LacpaoCorcinoVega2019}. Generalized
	multi-poly-Euler and multi-poly-Bernoulli polynomials were studied in relation
	to Hurwitz-Lerch multiple zeta values in
	\cite{CorcinoJolanyCorcinoKomatsu2017}. Interpolation formulas for generalized
	poly-Bernoulli polynomials and zeta-type functions were obtained in
	\cite{JolanyCorcino2015}. The same interpolation viewpoint is used here for Apostol-Mersenne
	polynomial families.
	
	The construction begins with the Mersenne translation polynomials
	\(P_{n,M}(x;m)\), defined by
	\[
	e_M^{xt}\left(e_M^t\right)^m
	=
	\sum_{n=0}^{\infty}
	P_{n,M}(x;m)\frac{t^n}{M_n!}.
	\]
	These polynomials play the role of the ordinary powers \((x+m)^n\) in the
	Mersenne setting. They are defined through products of M-exponential
	functions and avoid assigning a meaning to the formal expression
	\((x+_M m)^s\) when \(s\) is not a nonnegative integer.
	
	For \(r\in\mathbb{N}\), the Bernoulli-type and Euler-type Mersenne-Lerch
	interpolation values are defined by
	\[
	\Phi^{B,r}_{M,\lambda}(-n,x)
	=
	\sum_{m=0}^{\infty}
	\binom{m+r-1}{m}
	\lambda^mP_{n,M}(x;m),
	\]
	and
	\[
	\Phi^{E,r}_{M,\lambda}(-n,x)
	=
	\sum_{m=0}^{\infty}
	(-1)^m
	\binom{m+r-1}{m}
	\lambda^mP_{n,M}(x;m),
	\]
	where \(|\lambda|<1\). Their generating functions are
	\[
	\sum_{n=0}^{\infty}
	\Phi^{B,r}_{M,\lambda}(-n,x)\frac{t^n}{M_n!}
	=
	\frac{e_M^{xt}}{(1-\lambda e_M^t)^r},
	\]
	and
	\[
	\sum_{n=0}^{\infty}
	\Phi^{E,r}_{M,\lambda}(-n,x)\frac{t^n}{M_n!}
	=
	\frac{e_M^{xt}}{(1+\lambda e_M^t)^r}.
	\]
	These generating functions give
	\[
	\Phi^{B,r}_{M,\lambda}(-n,x)
	=
	(-1)^r
	\frac{M_n!}{M_{n+r}!}
	B^{(r)}_{n+r,M}(x;\lambda),
	\]
	and
	\[
	\Phi^{E,r}_{M,\lambda}(-n,x)
	=
	\frac{1}{2^r}
	E^{(r)}_{n,M}(x;\lambda).
	\]
	
	The paper develops the Mersenne translation polynomials, derives their
	M-multinomial and M-Stirling representations, and connects their weighted
	sums with higher-order Apostol-Mersenne polynomial families. The derivative, integral, addition,
	and difference formulas follow from the resulting generating functions.
	
	\section{Preliminaries}
	
	We use the following notation throughout the paper. Let
	\[
	M_n=2^n-1,\qquad n\geq 0,
	\]
	be the \(n\)-th Mersenne number. Following the M-calculus notation for
	Apostol-Mersenne polynomials in \cite{SunaRay2026}, define
	\[
	M_0!=1,
	\qquad
	M_n!=M_nM_{n-1}\cdots M_1
	\quad (n\geq 1).
	\]
	For \(0\leq k\leq n\), the M-binomial coefficient is
	\[
	\binom{n}{k}_{M}
	=
	\frac{M_n!}{M_k!M_{n-k}!}.
	\]
	We also use the convention
	\[
	\binom{n}{k}_{M}=0
	\qquad (k>n).
	\]
	
	The M-exponential function is defined by
	\[
	e_M^{xt}
	=
	\sum_{n=0}^{\infty}
	\frac{x^nt^n}{M_n!}.
	\]
	The M-binomial addition \(x+_M y\) is determined by
	\[
	(x+_M y)^n
	=
	\sum_{k=0}^{n}
	\binom{n}{k}_{M}
	x^ky^{n-k}.
	\]
	It follows that
	\[
	e_M^{xt}e_M^{yt}
	=
	e_M^{(x+_M y)t}.
	\]
	
	For a function \(f\), the M-derivative is given by
	\[
	D_xf(x)
	=
	\frac{f(2x)-f(x)}{x},
	\qquad x\neq 0,
	\]
	with
	\[
	D_xf(0)=\lim_{x\to 0}D_xf(x)
	\]
	whenever the limit exists. In particular,
	\[
	D_x(x^n)=M_nx^{n-1},
	\qquad n\geq 1.
	\]
	
	The M-integral is taken as the inverse operation of the M-derivative. If
	\(D_xF(x)=f(x)\), then
	\begin{equation}\label{Mintegral}
		\int_a^b f(x)\,d_Mx
		=
		F(b)-F(a).
	\end{equation}
	For monomials, this gives
	\[
	\int_0^1 x^n\,d_Mx
	=
	\frac{1}{M_{n+1}},
	\qquad n\geq 0.
	\]
	
	We also use the Apostol-Mersenne-Bernoulli and
	Apostol-Mersenne-Euler polynomials of order \(r\). For
	\(r\in\mathbb{N}\), their generating functions are
	\cite{SunaRay2026}
	\begin{equation}\label{AposMerBern}
		\left(\frac{t}{\lambda e_M^t-1}\right)^r e_M^{xt}
		=
		\sum_{n=0}^{\infty}
		B^{(r)}_{n,M}(x;\lambda)\frac{t^n}{M_n!},
	\end{equation}
	and
	\begin{equation}\label{AposMerEu}
		\left(\frac{2}{\lambda e_M^t+1}\right)^r e_M^{xt}
		=
		\sum_{n=0}^{\infty}
		E^{(r)}_{n,M}(x;\lambda)\frac{t^n}{M_n!}.
	\end{equation}
	
	The choices \(r=1\) and \(\lambda=1\) in
	\eqref{AposMerBern} and \eqref{AposMerEu} give the generating functions
	of the Mersenne-Bernoulli and Mersenne-Euler polynomials studied in
	\cite{SunaRay2026Mersenne}. This specialization concerns the polynomial
	generating functions only. The interpolation values introduced below are
	defined by convergent series under the condition \(|\lambda|<1\), so the
	case \(\lambda=1\) is not included in their stated domain. Thus, the basic
	Mersenne polynomial families are not obtained by substituting
	\(\lambda=1\) directly into the defining interpolation series.
	
	When \(x=0\), we write
	\[
	B^{(r)}_{n,M}(\lambda)
	=
	B^{(r)}_{n,M}(0;\lambda),
	\qquad
	E^{(r)}_{n,M}(\lambda)
	=
	E^{(r)}_{n,M}(0;\lambda).
	\]
	
	The Apostol-type Bernoulli and Euler polynomials go back to Apostol's work on
	the Lerch zeta function \cite{Apostol1951}. Luo and Srivastava later studied
	their higher-order forms and their relation with the Hurwitz-Lerch zeta
	function \cite{LuoSrivastava2005,LuoSrivastava2006}. The same
	generating-function approach is used here with the M-factorial \(M_n!\) and
	the M-exponential \(e_M^{xt}\).
	
	We shall also use the falling factorial
	\[
	(m)_0=1,
	\qquad
	(m)_j=m(m-1)\cdots(m-j+1)
	\quad (j\geq 1).
	\]
	For \(r\in\mathbb{N}\), the negative binomial expansion is
	\[
	\frac{1}{(1-z)^r}
	=
	\sum_{m=0}^{\infty}
	\binom{m+r-1}{m}z^m,
	\qquad |z|<1.
	\]
	
	The Hurwitz-Lerch factorial zeta function appears in the study of
	poly-Cauchy and poly-Bernoulli-type numbers and polynomials. One form is
	\[
	\Phi_f(z,s,a)
	=
	\sum_{n=0}^{\infty}
	\frac{z^n}{n!(n+a)^s},
	\qquad
	a\notin\{0,-1,-2,\ldots\},
	\]
	with the usual restrictions on \(z\) and \(s\). It has been used to define
	Hurwitz-Lerch-type poly-Cauchy and poly-Bernoulli objects
	\cite{CenkciYoung2015,Lacpao2023}. In the present construction, the
	interpolation values are formed from the Mersenne translation polynomials
	and the factorial \(M_n!\).
	
	\section{Mersenne translation polynomials}
	
	The interpolation construction developed below is based on a family of Mersenne translation polynomials. Let \(m\in\mathbb{N}_0\) and
\(x\in\mathbb{C}\).
	
	\begin{definition}\label{MersenneTransPol}
		For \(m\in\mathbb{N}_0\), the Mersenne translation polynomials
		\(P_{n,M}(x;m)\) are defined by
		\[
		e_M^{xt}\left(e_M^t\right)^m
		=
		\sum_{n=0}^{\infty}
		P_{n,M}(x;m)\frac{t^n}{M_n!}.
		\]
		In coefficient form,
		\[
		P_{n,M}(x;m)
		=
		M_n!\,[t^n]
		\left(
		e_M^{xt}\left(e_M^t\right)^m
		\right),
		\qquad n\geq0.
		\]
	\end{definition}
	
	The factor \(\left(e_M^t\right)^m\) is the ordinary \(m\)-fold product of
	\(e_M^t\), so \(m\) is taken to be a nonnegative integer. The polynomial
	\(P_{n,M}(x;m)\) plays the role of \((x+m)^n\) in the Mersenne setting, but
	its definition uses products of M-exponential functions rather than ordinary
	addition.
	
	\begin{theorem}
		Let \(m\in\mathbb{N}_0\) and \(n\geq0\). Then
		\[
		P_{n,M}(x;m)
		=
		\sum_{\substack{
				r_0,r_1,\ldots,r_m\geq0\\
				r_0+r_1+\cdots+r_m=n
		}}
		\frac{M_n!}
		{M_{r_0}!M_{r_1}!\cdots M_{r_m}!}
		x^{r_0}.
		\]
		If the M-multinomial coefficient is defined by
		\[
		\binom{n}{r_0,r_1,\ldots,r_m}_M
		=
		\frac{M_n!}
		{M_{r_0}!M_{r_1}!\cdots M_{r_m}!},
		\qquad
		r_0+r_1+\cdots+r_m=n,
		\]
		then
		\[
		P_{n,M}(x;m)
		=
		\sum_{\substack{
				r_0,r_1,\ldots,r_m\geq0\\
				r_0+r_1+\cdots+r_m=n
		}}
		\binom{n}{r_0,r_1,\ldots,r_m}_M
		x^{r_0}.
		\]
	\end{theorem}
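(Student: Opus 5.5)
The plan is to read off the coefficient of \(t^n\) directly from the definition of \(P_{n,M}(x;m)\) in Definition~\ref{MersenneTransPol}, treating everything as formal power series in \(t\).

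First, write the left-hand side as a product of \(m+1\) M-exponential series:
\[
e_M^{xt}\left(e_M^t\right)^m
=
\left(\sum_{r_0=0}^{\infty}\frac{x^{r_0}t^{r_0}}{M_{r_0}!}\right)
\prod_{i=1}^{m}\left(\sum_{r_i=0}^{\infty}\frac{t^{r_i}}{M_{r_i}!}\right).
\]
Each factor is a formal power series in \(t\) with constant term \(1\). Multiplying them out by the Cauchy product for \(m+1\) factors gives
\[
\sum_{n=0}^{\infty}
\left(
\sum_{\substack{r_0,\ldots,r_m\geq0\\ r_0+\cdots+r_m=n}}
\frac{x^{r_0}}{M_{r_0}!M_{r_1}!\cdots M_{r_m}!}
\right)t^n .
\]
For each fixed \(n\) the inner sum has finitely many terms, so the product is well defined and the rearrangement is legitimate.

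Next, take \(M_n!\,[t^n]\) of this expression. By the coefficient form in Definition~\ref{MersenneTransPol}, this yields the first displayed formula. The second formula is then only a renaming, using the stated definition of the M-multinomial coefficient.

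For \(m=0\) the product over \(i\) is empty. The sum then reduces to the single term \(r_0=n\), giving \(P_{n,M}(x;0)=x^n\), which is consistent with the statement.

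There is no genuine obstacle. The only point needing care is the justification of the multi-factor Cauchy product. This can be handled by induction on \(m\): multiplying the product of the first \(m\) factors by one more \(e_M^t\) convolves the coefficients, which adds a new index \(r_m\) and merges the constraint into \(r_0+\cdots+r_m=n\). As an alternative route, the M-binomial addition \(e_M^{xt}e_M^{yt}=e_M^{(x+_My)t}\) could be iterated. However, the direct coefficient extraction is cleaner, and it avoids interpreting \(x+_M y\) beyond the polynomial identities.
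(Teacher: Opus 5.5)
Your proposal is correct and follows essentially the same route as the paper: expand \(e_M^{xt}(e_M^t)^m\) as a product of \(m+1\) M-exponential series, group the terms with \(r_0+\cdots+r_m=n\), and compare the coefficients of \(t^n/M_n!\). Your remarks on the \(m=0\) case and on justifying the multi-factor Cauchy product by induction on \(m\) are minor additions that the paper leaves implicit.
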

	
	\begin{proof}
		By Definition~\ref{MersenneTransPol},
		\[
		\sum_{n=0}^{\infty}
		P_{n,M}(x;m)\frac{t^n}{M_n!}
		=
		e_M^{xt}\left(e_M^t\right)^m.
		\]
		Expanding the right-hand side gives
		\begin{align*}
			e_M^{xt}\left(e_M^t\right)^m
			&=
			\left(
			\sum_{r_0=0}^{\infty}
			\frac{x^{r_0}t^{r_0}}{M_{r_0}!}
			\right)
			\prod_{j=1}^{m}
			\left(
			\sum_{r_j=0}^{\infty}
			\frac{t^{r_j}}{M_{r_j}!}
			\right)
			\\
			&=
			\sum_{r_0,r_1,\ldots,r_m\geq0}
			\frac{
				x^{r_0}t^{r_0+r_1+\cdots+r_m}
			}{
				M_{r_0}!M_{r_1}!\cdots M_{r_m}!
			}
			\\
			&=
			\sum_{n=0}^{\infty}
			\left(
			\sum_{\substack{
					r_0,r_1,\ldots,r_m\geq0\\
					r_0+r_1+\cdots+r_m=n
			}}
			\frac{x^{r_0}}
			{M_{r_0}!M_{r_1}!\cdots M_{r_m}!}
			\right)t^n
			\\
			&=
			\sum_{n=0}^{\infty}
			\left(
			\sum_{\substack{
					r_0,r_1,\ldots,r_m\geq0\\
					r_0+r_1+\cdots+r_m=n
			}}
			\frac{
				M_n!x^{r_0}
			}{
				M_{r_0}!M_{r_1}!\cdots M_{r_m}!
			}
			\right)
			\frac{t^n}{M_n!}.
		\end{align*}
		Comparing coefficients gives the result.
	\end{proof}
	
	When \(m=0\), the defining relation reduces to
	\[
	P_{n,M}(x;0)=x^n.
	\]
	For \(m=1\),
	\[
	P_{n,M}(x;1)
	=
	\sum_{k=0}^{n}
	\binom{n}{k}_M x^k
	=
	(x+_M1)^n.
	\]
	Thus, \(P_{n,M}(x;m)\) extends the M-binomial translation from one
	M-exponential factor to \(m\) repeated factors.
	
	The next result describes the action of the M-derivative on these
	polynomials.
	
	\begin{theorem}
		Let \(m\in\mathbb{N}_0\). For \(n\geq1\),
		\[
		D_xP_{n,M}(x;m)
		=
		M_nP_{n-1,M}(x;m).
		\]
		When \(n=0\),
		\[
		D_xP_{0,M}(x;m)=0.
		\]
	\end{theorem}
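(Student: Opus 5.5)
The plan is to apply \(D_x\) termwise to the explicit formula from the previous theorem, since \(P_{n,M}(x;m)\) is a polynomial in \(x\) and \(D_x\) is linear. For \(n=0\) the only admissible tuple is \(r_0=\cdots=r_m=0\), so \(P_{0,M}(x;m)=1\). Then \(D_x1=(1-1)/x=0\) for \(x\neq0\), and the limit at \(0\) is also \(0\).

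For \(n\geq1\), I would write the expansion as
\[
P_{n,M}(x;m)=\sum_{k=0}^{n}\binom{n}{k}_M c_{n-k}\,x^k,
\qquad
c_{j}=\sum_{\substack{r_1,\ldots,r_m\geq0\\ r_1+\cdots+r_m=j}}\frac{M_j!}{M_{r_1}!\cdots M_{r_m}!}.
\]
This is obtained by grouping the tuples according to \(r_0=k\) and using
\[
\frac{M_n!}{M_k!\,M_{r_1}!\cdots M_{r_m}!}=\binom{n}{k}_M\frac{M_{n-k}!}{M_{r_1}!\cdots M_{r_m}!}.
\]
The important point is that \(c_j\) depends only on \(j\) and not on \(n\). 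Equivalently, \(c_j=M_j!\,[t^j](e_M^t)^m\), so
\[
\sum_n P_{n,M}(x;m)\frac{t^n}{M_n!}=e_M^{xt}\sum_j c_j\frac{t^j}{M_j!},
\]
and one can read this off directly from Definition~\ref{MersenneTransPol}.

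Next I apply \(D_x(x^k)=M_kx^{k-1}\) for \(k\geq1\), together with \(D_x(x^0)=0\):
\[
D_xP_{n,M}(x;m)=\sum_{k=1}^{n}\binom{n}{k}_M M_k\,c_{n-k}\,x^{k-1}.
\]
The key identity is
\[
\binom{n}{k}_M M_k=\frac{M_n!}{M_{k-1}!\,M_{n-k}!}=M_n\binom{n-1}{k-1}_M,
\]
which follows from \(M_k!=M_kM_{k-1}!\) and \(M_n!=M_nM_{n-1}!\). Substituting \(k-1=i\) gives
\[
D_xP_{n,M}(x;m)=M_n\sum_{i=0}^{n-1}\binom{n-1}{i}_M c_{n-1-i}\,x^i=M_nP_{n-1,M}(x;m).
\]

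Alternatively, I could phrase the whole argument at the level of generating functions. One notes that \(D_x e_M^{xt}=t\,e_M^{xt}\), because \(D_x\) lowers \(x^n t^n/M_n!\) to \(x^{n-1}t^n/M_{n-1}!\). Applying \(D_x\) to the defining identity then gives \(t\sum_n P_{n,M}t^n/M_n!\), and comparing coefficients of \(t^n/M_n!\) yields the result. This requires justifying termwise application of \(D_x\) to a formal power series in \(t\), which is immediate because \(D_x\) acts coefficientwise in \(t\) on polynomials in \(x\).

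The only step needing care is the regrouping into \(c_{n-k}\) with \(c\) independent of \(n\). Once that is in place, the rest is routine index shifting.
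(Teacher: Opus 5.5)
Your proof is correct. Your main argument is not the paper's route, but your ``alternatively'' paragraph is exactly what the paper does: it uses $D_xe_M^{xt}=te_M^{xt}$, applies $D_x$ to $e_M^{xt}(e_M^t)^m$, and compares coefficients of $t^n/M_n!$. It reads off the case $n=0$ from the absence of a constant term on the right.

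Your primary argument is the coefficient-level unrolling of that computation. Regrouping the M-multinomial formula by $r_0=k$ gives $P_{n,M}(x;m)=\sum_k\binom{n}{k}_M c_{n-k}x^k$ with $c_j$ independent of $n$; this is the coefficient form of the factorization $e_M^{xt}\cdot(e_M^t)^m$. The identity $M_k\binom{n}{k}_M=M_n\binom{n-1}{k-1}_M$ then does the work of $D_xe_M^{xt}=te_M^{xt}$ together with the index shift.

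What your version buys is that it is entirely finite. It only manipulates polynomials in $x$, so you never need to pass $D_x$, with its limit definition at $x=0$, through an infinite series in $t$; the paper does this silently. It also shows that the lowering relation holds for any sequence of this Appell-type form $\sum_k\binom{n}{k}_M c_{n-k}x^k$.

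What the paper's version buys is brevity and independence from the explicit multinomial formula. It uses only that the $m$-dependent factor does not involve $x$. That is why the same few lines are reused later for the derivative formulas of $\Phi^{B,r}_{M,\lambda}$ and $\Phi^{E,r}_{M,\lambda}$, with $(1\mp\lambda e_M^t)^{-r}$ in place of $(e_M^t)^m$, where no finite regrouping of the kind you use is immediately available.
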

	
	\begin{proof}
		From the definition of the M-exponential function,
		\begin{align*}
			D_xe_M^{xt}
			&=
			\sum_{n=1}^{\infty}
			M_nx^{n-1}\frac{t^n}{M_n!}
			\\
			&=
			t\sum_{n=0}^{\infty}
			\frac{x^nt^n}{M_n!}
			\\
			&=
			te_M^{xt}.
		\end{align*}
		Applying \(D_x\) to the generating function in
		Definition~\ref{MersenneTransPol}, we obtain
		\begin{align*}
			\sum_{n=0}^{\infty}
			D_xP_{n,M}(x;m)\frac{t^n}{M_n!}
			&=
			D_x\left(
			e_M^{xt}\left(e_M^t\right)^m
			\right)
			\\
			&=
			te_M^{xt}\left(e_M^t\right)^m
			\\
			&=
			t\sum_{n=0}^{\infty}
			P_{n,M}(x;m)\frac{t^n}{M_n!}
			\\
			&=
			\sum_{n=1}^{\infty}
			M_nP_{n-1,M}(x;m)
			\frac{t^n}{M_n!}.
		\end{align*}
		For \(n\geq1\), comparing the coefficients of
		\(\displaystyle t^n/M_n!\) gives
		\[
		D_xP_{n,M}(x;m)
		=
		M_nP_{n-1,M}(x;m).
		\]
		The right-hand side has no constant term, so
		\[
		D_xP_{0,M}(x;m)=0.
		\]
	\end{proof}
	
	The derivative formula shows that \(P_{n,M}(x;m)\) satisfies the same
	lowering relation as \(x^n\), with \(M_n\) in place of the ordinary factor
	\(n\).
	
	\section{M-Stirling numbers and explicit expansions}
	
	The M-Stirling numbers of the second kind provide a finite-sum expression for
	the Mersenne translation polynomials in terms of the integer parameter \(m\).
	
	\begin{definition}
		For \(j\in\mathbb{N}_0\), the M-Stirling numbers of the second kind,
		denoted by \(S_M(n,j)\), are defined by
		\[
		\frac{(e_M^t-1)^j}{j!}
		=
		\sum_{n=0}^{\infty}
		S_M(n,j)\frac{t^n}{M_n!}.
		\]
		In coefficient form,
		\[
		S_M(n,j)
		=
		\frac{M_n!}{j!}\,[t^n](e_M^t-1)^j,
		\qquad n,j\geq0.
		\]
	\end{definition}
	
	It follows from the definition that
	\[
	S_M(0,0)=1,
	\qquad
	S_M(n,0)=0
	\quad (n\geq1).
	\]
	Since \((e_M^t-1)^j\) has no term of degree less than \(j\),
	\[
	S_M(n,j)=0
	\qquad (j>n).
	\]
	
	\begin{theorem}\label{exp1}
		Let \(m\in\mathbb{N}_0\) and \(n\geq0\). Then
		\[
		P_{n,M}(x;m)
		=
		\sum_{\ell=0}^{n}
		\binom{n}{\ell}_M
		x^\ell
		\sum_{j=0}^{n-\ell}
		(m)_jS_M(n-\ell,j),
		\]
		where
		\[
		(m)_0=1,
		\qquad
		(m)_j=m(m-1)\cdots(m-j+1)
		\quad (j\geq1).
		\]
	\end{theorem}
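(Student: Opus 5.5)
The plan is to work entirely at the level of generating functions and compare coefficients, as in the proofs of the two preceding theorems. Write $(e_M^t)^m = \bigl(1+(e_M^t-1)\bigr)^m$ and expand it with the ordinary binomial theorem. Since $m\in\mathbb{N}_0$, this is a finite sum:
\[
\left(e_M^t\right)^m=\sum_{j=0}^{m}\binom{m}{j}(e_M^t-1)^j=\sum_{j=0}^{m}(m)_j\,\frac{(e_M^t-1)^j}{j!}.
\]
Because $(m)_j=0$ for $j>m$, the upper limit can be taken as $\infty$ without changing the sum. Substituting the definition of the M-Stirling numbers then gives
\[
\left(e_M^t\right)^m=\sum_{j\ge0}(m)_j\sum_{k\ge0}S_M(k,j)\frac{t^k}{M_k!}=\sum_{k\ge0}\Bigl(\sum_{j=0}^{k}(m)_jS_M(k,j)\Bigr)\frac{t^k}{M_k!}.
\]
Here the inner sum is cut off at $j=k$ because $S_M(k,j)=0$ for $j>k$. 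Interchanging the order of summation is harmless: the outer $j$-sum is finite, and each coefficient of $t^k$ receives only finitely many contributions.

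Next, multiply by $e_M^{xt}=\sum_{\ell\ge0}x^\ell t^\ell/M_\ell!$ and take the Cauchy product. The coefficient of $t^n$ is
\[
\sum_{\ell=0}^{n}\frac{x^\ell}{M_\ell!}\cdot\frac{1}{M_{n-\ell}!}\sum_{j=0}^{n-\ell}(m)_jS_M(n-\ell,j).
\]
Multiplying by $M_n!$ turns $M_n!/(M_\ell!M_{n-\ell}!)$ into $\binom{n}{\ell}_M$. By Definition~\ref{MersenneTransPol}, this coefficient times $M_n!$ is exactly $P_{n,M}(x;m)$, which gives the stated formula.

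There is no real obstacle here. The only points needing care are that the binomial expansion of $(1+(e_M^t-1))^m$ uses the ordinary binomial theorem, which is valid because the powers are ordinary products of power series, and that all manipulations are in the ring of formal power series in $t$, where the truncations $j\le m$ and $j\le k$ make every sum finite. As a sanity check, $m=0$ gives only $j=0$ with $S_M(n-\ell,0)=\delta_{n\ell}$, hence $x^n$. Likewise $m=1$ gives $\sum_\ell\binom{n}{\ell}_M x^\ell$, since $S_M(k,0)+S_M(k,1)=1$ for all $k$. Both agree with the remarks after the first theorem.
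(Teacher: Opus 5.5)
Your proposal is correct and follows essentially the same route as the paper: expanding $\left(1+(e_M^t-1)\right)^m$ binomially with $\binom{m}{j}=(m)_j/j!$, substituting the M-Stirling generating function, truncating via $S_M(q,j)=0$ for $j>q$ and $(m)_j=0$ for $j>m$, and taking the Cauchy product with $e_M^{xt}$ to produce $\binom{n}{\ell}_M$. Your formal-power-series justification and the $m=0$, $m=1$ sanity checks are correct extras the paper does not spell out.
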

	
	\begin{proof}
		By Definition~\ref{MersenneTransPol},
		\[
		\sum_{n=0}^{\infty}
		P_{n,M}(x;m)\frac{t^n}{M_n!}
		=
		e_M^{xt}\left(e_M^t\right)^m.
		\]
		Using
		\[
		\left(e_M^t\right)^m
		=
		\left(1+(e_M^t-1)\right)^m,
		\]
		we obtain
		\begin{align*}
			e_M^{xt}\left(e_M^t\right)^m
			&=
			\left(
			\sum_{\ell=0}^{\infty}
			\frac{x^\ell t^\ell}{M_\ell!}
			\right)
			\left(
			\sum_{j=0}^{m}
			\binom{m}{j}(e_M^t-1)^j
			\right)
			\\
			&=
			\left(
			\sum_{\ell=0}^{\infty}
			\frac{x^\ell t^\ell}{M_\ell!}
			\right)
			\left(
			\sum_{j=0}^{m}
			(m)_j\frac{(e_M^t-1)^j}{j!}
			\right)
			\\
			&=
			\left(
			\sum_{\ell=0}^{\infty}
			\frac{x^\ell t^\ell}{M_\ell!}
			\right)
			\left(
			\sum_{j=0}^{m}
			(m)_j
			\sum_{q=0}^{\infty}
			S_M(q,j)\frac{t^q}{M_q!}
			\right).
		\end{align*}
		Since \(S_M(q,j)=0\) for \(j>q\) and \((m)_j=0\) for \(j>m\),
		the second factor can be written as
		\[
		\sum_{q=0}^{\infty}
		\left(
		\sum_{j=0}^{q}
		(m)_jS_M(q,j)
		\right)
		\frac{t^q}{M_q!}.
		\]
		Therefore,
		\begin{align*}
			e_M^{xt}\left(e_M^t\right)^m
			&=
			\sum_{n=0}^{\infty}
			\left[
			\sum_{\ell=0}^{n}
			\frac{x^\ell}
			{M_\ell!M_{n-\ell}!}
			\sum_{j=0}^{n-\ell}
			(m)_jS_M(n-\ell,j)
			\right]t^n
			\\
			&=
			\sum_{n=0}^{\infty}
			\left[
			\sum_{\ell=0}^{n}
			\binom{n}{\ell}_M
			x^\ell
			\sum_{j=0}^{n-\ell}
			(m)_jS_M(n-\ell,j)
			\right]
			\frac{t^n}{M_n!}.
		\end{align*}
		Comparing the coefficients of \(\displaystyle t^n/M_n!\) gives the result.
	\end{proof}
	
	The formula separates the powers of \(x\) from the dependence on \(m\), which
	appears through the falling factorials \((m)_j\).
	
	\section{Bernoulli-type Mersenne-Lerch interpolation}
	
	The Mersenne translation polynomials give a Bernoulli-type interpolation
	family whose values at nonpositive integers are connected with the
	Apostol-Mersenne-Bernoulli polynomials of order \(r\).
	
	\begin{definition}\label{BernMersenneLerchInt}
		Let \(r\in\mathbb{N}\), \(n\in\mathbb{N}_0\),
		\(x\in\mathbb{C}\), and \(|\lambda|<1\). The Bernoulli-type
		Mersenne-Lerch interpolation value of order \(r\) at the nonpositive
		integer \(-n\) is defined by
		\[
		\Phi^{B,r}_{M,\lambda}(-n,x)
		=
		\sum_{m=0}^{\infty}
		\binom{m+r-1}{m}
		\lambda^mP_{n,M}(x;m).
		\]
	\end{definition}
	
	By Theorem \ref{exp1}, for fixed \(n\), the polynomial \(P_{n,M}(x;m)\) has degree
	at most \(n\) in \(m\). Hence, the defining series converges for \(|\lambda|<1\). When
	\(r=1\), we write
	\[
	\Phi^{B}_{M,\lambda}(-n,x)
	=
	\Phi^{B,1}_{M,\lambda}(-n,x)
	=
	\sum_{m=0}^{\infty}
	\lambda^mP_{n,M}(x;m).
	\]
	
	The generating function follows from the defining relation for
	\(P_{n,M}(x;m)\) and the negative binomial expansion.
	
	\begin{theorem}\label{ApostolMersBern}
		Let \(r\in\mathbb{N}\), \(x\in\mathbb{C}\), and
		\(|\lambda|<1\). Then
		\[
		\sum_{n=0}^{\infty}
		\Phi^{B,r}_{M,\lambda}(-n,x)\frac{t^n}{M_n!}
		=
		\frac{e_M^{xt}}{(1-\lambda e_M^t)^r},
		\]
		for all \(t\) in a neighborhood of \(0\) satisfying
		\[
		|\lambda e_M^t|<1.
		\]
	\end{theorem}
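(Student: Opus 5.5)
The plan is to start from the right-hand side, expand it with the negative binomial series, and then recognize the $P_{n,M}(x;m)$ through Definition~\ref{MersenneTransPol}. The one step that needs care is interchanging the sums over $m$ and $n$.

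First fix $t$ near $0$ with $|\lambda e_M^t|<1$. Put $z=\lambda e_M^t$ in the negative binomial expansion. This gives
\[
\frac{e_M^{xt}}{(1-\lambda e_M^t)^r}
=\sum_{m=0}^{\infty}\binom{m+r-1}{m}\lambda^m e_M^{xt}\left(e_M^t\right)^m .
\]
Next, Definition~\ref{MersenneTransPol} rewrites each summand as $\binom{m+r-1}{m}\lambda^m\sum_{n\ge0}P_{n,M}(x;m)t^n/M_n!$. Once the order of summation in $m$ and $n$ is exchanged, the coefficient of $t^n/M_n!$ is exactly $\Phi^{B,r}_{M,\lambda}(-n,x)$ by Definition~\ref{BernMersenneLerchInt}. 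This is the claimed identity.

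The interchange is the main obstacle. I will justify it by absolute convergence of the double series. The explicit form from the first theorem of Section~3 bounds the terms:
\[
|P_{n,M}(x;m)|\le \sum_{r_0+\cdots+r_m=n}\frac{M_n!}{M_{r_0}!\cdots M_{r_m}!}|x|^{r_0}=P_{n,M}(|x|;m),
\]
since all M-multinomial coefficients are positive. Hence
\[
\sum_{n}\sum_{m}\binom{m+r-1}{m}|\lambda|^m|P_{n,M}(x;m)|\frac{|t|^n}{M_n!}
\le\sum_m\binom{m+r-1}{m}|\lambda|^m e_M^{|x||t|}\left(e_M^{|t|}\right)^m
=\frac{e_M^{|x||t|}}{(1-|\lambda| e_M^{|t|})^r}.
\]
This bound is finite as soon as $|\lambda|e_M^{|t|}<1$. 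That holds in a neighborhood of $0$, because $|\lambda|<1$ and $e_M^{|t|}\to1$ as $t\to0$. Here $e_M$ is entire, since $M_n!\ge 2^{n(n-1)/2}$ grows superexponentially.

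On such a neighborhood, Tonelli/Fubini for double series permits the rearrangement. So the identity holds there, in particular where $|\lambda e_M^t|<1$ near $0$. If one prefers to work purely formally, the same computation with coefficient extraction already gives the identity of formal power series. The remark after Definition~\ref{BernMersenneLerchInt}, that $P_{n,M}(x;m)$ is polynomial in $m$ by Theorem~\ref{exp1}, guarantees that each coefficient series converges. The analytic statement then follows from the domination above.
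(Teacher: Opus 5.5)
Your proof is correct and is essentially the paper's argument. Both proofs justify swapping the $m$- and $n$-sums by the domination $\sum_n |P_{n,M}(x;m)|\,|t|^n/M_n! \le e_M^{|x||t|}(e_M^{|t|})^m$ on a disk where $|\lambda| e_M^{|t|}<1$, and then apply Definition~\ref{MersenneTransPol} and the negative binomial expansion; you run the computation from the right-hand side, and you also supply the justification the paper leaves implicit, namely $|P_{n,M}(x;m)|\le P_{n,M}(|x|;m)$ by positivity of the M-multinomial coefficients, together with the entireness of $e_M$.
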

	
	\begin{proof}
		By Definition~\ref{BernMersenneLerchInt},
		\[
		\sum_{n=0}^{\infty}
		\Phi^{B,r}_{M,\lambda}(-n,x)\frac{t^n}{M_n!}
		=
		\sum_{n=0}^{\infty}
		\left(
		\sum_{m=0}^{\infty}
		\binom{m+r-1}{m}
		\lambda^mP_{n,M}(x;m)
		\right)
		\frac{t^n}{M_n!}.
		\]
		
		Since \(|\lambda|<1\), choose \(\delta>0\) such that
		\[
		|\lambda|e_M^\delta<1.
		\]
		For \(|t|\leq\delta\),
		\[
		\sum_{n=0}^{\infty}
		\left|P_{n,M}(x;m)\right|
		\frac{|t|^n}{M_n!}
		\leq
		e_M^{|x||t|}
		\left(e_M^{|t|}\right)^m.
		\]
		Thus,
		\[
		\sum_{m=0}^{\infty}
		\binom{m+r-1}{m}
		|\lambda|^m
		\sum_{n=0}^{\infty}
		\left|P_{n,M}(x;m)\right|
		\frac{|t|^n}{M_n!}
		\]
		is bounded by
		\[
		e_M^{|x||t|}
		\sum_{m=0}^{\infty}
		\binom{m+r-1}{m}
		\left(
		|\lambda|e_M^{|t|}
		\right)^m,
		\]
		which is finite. The order of summation may therefore be changed.
		
		Using Definition~\ref{MersenneTransPol}, we obtain
		\begin{align*}
			\sum_{n=0}^{\infty}
			\Phi^{B,r}_{M,\lambda}(-n,x)\frac{t^n}{M_n!}
			&=
			\sum_{m=0}^{\infty}
			\binom{m+r-1}{m}
			\lambda^m
			\sum_{k=0}^{\infty}
			P_{k,M}(x;m)\frac{t^k}{M_k!}
			\\
			&=
			\sum_{m=0}^{\infty}
			\binom{m+r-1}{m}
			\lambda^m
			e_M^{xt}\left(e_M^t\right)^m
			\\
			&=
			e_M^{xt}
			\sum_{m=0}^{\infty}
			\binom{m+r-1}{m}
			\left(\lambda e_M^t\right)^m.
		\end{align*}
		For \(|\lambda e_M^t|<1\), the negative binomial expansion gives
		\[
		\sum_{m=0}^{\infty}
		\binom{m+r-1}{m}
		\left(\lambda e_M^t\right)^m
		=
		\frac{1}{(1-\lambda e_M^t)^r}.
		\]
		Hence,
		\[
		\sum_{n=0}^{\infty}
		\Phi^{B,r}_{M,\lambda}(-n,x)\frac{t^n}{M_n!}
		=
		\frac{e_M^{xt}}{(1-\lambda e_M^t)^r}.
		\]
	\end{proof}
	
	The factor \(t^r\) and the sign in the generating function of the
	Apostol-Mersenne-Bernoulli polynomials give the following interpolation
	formula.
	
	\begin{theorem}
		Let \(r\in\mathbb{N}\), \(n\in\mathbb{N}_0\),
		\(x\in\mathbb{C}\), and \(|\lambda|<1\). Then
		\[
		\Phi^{B,r}_{M,\lambda}(-n,x)
		=
		(-1)^r
		\frac{M_n!}{M_{n+r}!}
		B^{(r)}_{n+r,M}(x;\lambda),
		\]
		where \(B^{(r)}_{n,M}(x;\lambda)\) denotes the
		Apostol-Mersenne-Bernoulli polynomial of order \(r\).
	\end{theorem}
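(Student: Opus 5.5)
Our plan is to compare generating functions, using Theorem~\ref{ApostolMersBern} together with the defining relation \eqref{AposMerBern}. First we rewrite the right-hand side of Theorem~\ref{ApostolMersBern} in a form that exhibits the Apostol-Mersenne-Bernoulli kernel. Since \(1-\lambda e_M^t=-(\lambda e_M^t-1)\), we have
\[
\frac{e_M^{xt}}{(1-\lambda e_M^t)^r}
=
\frac{(-1)^r}{t^r}\left(\frac{t}{\lambda e_M^t-1}\right)^r e_M^{xt}
=
(-1)^r\sum_{k=0}^{\infty}B^{(r)}_{k,M}(x;\lambda)\frac{t^{k-r}}{M_k!},
\]
valid for \(t\neq0\) in the neighborhood of \(0\) given in Theorem~\ref{ApostolMersBern}. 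Because \(|\lambda|<1\), we have \(\lambda e_M^0-1=\lambda-1\neq0\). Hence \(t/(\lambda e_M^t-1)\) is analytic at \(t=0\) and vanishes there to first order, so its \(r\)-th power vanishes to order \(r\).

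The key step is to show that \(B^{(r)}_{k,M}(x;\lambda)=0\) for \(0\le k<r\). This follows immediately from the vanishing order just noted: the series \(\left(t/(\lambda e_M^t-1)\right)^r e_M^{xt}\) begins at \(t^r\). Reindexing with \(k=n+r\) then gives
\[
\frac{e_M^{xt}}{(1-\lambda e_M^t)^r}
=
(-1)^r\sum_{n=0}^{\infty}B^{(r)}_{n+r,M}(x;\lambda)\frac{t^{n}}{M_{n+r}!}
=
\sum_{n=0}^{\infty}\left((-1)^r\frac{M_n!}{M_{n+r}!}B^{(r)}_{n+r,M}(x;\lambda)\right)\frac{t^n}{M_n!}.
\]

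Finally, we compare this with the left-hand side of Theorem~\ref{ApostolMersBern}. Both sides are power series in \(t\) converging on a common punctured neighborhood of \(0\), and both are in fact analytic at \(0\). By uniqueness of power series coefficients, the coefficients of \(t^n/M_n!\) must agree, which is exactly the claimed identity.

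The only delicate point is the vanishing of the low-order coefficients \(B^{(r)}_{k,M}\) for \(k<r\), which is what justifies the reindexing. This depends on \(\lambda\neq1\), which is guaranteed by the hypothesis \(|\lambda|<1\). For \(\lambda=1\) the kernel \(t/(e_M^t-1)\) would not vanish at \(0\), and the argument would break down.
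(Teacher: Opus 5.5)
Your proof is correct and matches the paper's argument step for step. You rewrite $e_M^{xt}/(1-\lambda e_M^t)^r$ as $(-1)^r t^{-r}\bigl(t/(\lambda e_M^t-1)\bigr)^r e_M^{xt}$, use $\lambda\neq1$ to get $B^{(r)}_{k,M}(x;\lambda)=0$ for $0\le k<r$, shift the index $k=n+r$, and compare coefficients with Theorem~\ref{ApostolMersBern}, exactly as the paper does.
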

	
	\begin{proof}
		From the generating function in \eqref{AposMerBern},
		\[
		\left(
		\frac{t}{\lambda e_M^t-1}
		\right)^r
		e_M^{xt}
		=
		\sum_{k=0}^{\infty}
		B^{(r)}_{k,M}(x;\lambda)
		\frac{t^k}{M_k!}.
		\]
		Since
		\[
		1-\lambda e_M^t
		=
		-\left(\lambda e_M^t-1\right),
		\]
		we have
		\[
		\frac{e_M^{xt}}{(1-\lambda e_M^t)^r}
		=
		(-1)^rt^{-r}
		\left(
		\frac{t}{\lambda e_M^t-1}
		\right)^r
		e_M^{xt}.
		\]
		Therefore,
		\[
		\frac{e_M^{xt}}{(1-\lambda e_M^t)^r}
		=
		(-1)^rt^{-r}
		\sum_{k=0}^{\infty}
		B^{(r)}_{k,M}(x;\lambda)
		\frac{t^k}{M_k!}.
		\]
		
		Because \(|\lambda|<1\), we have \(\lambda\neq1\). Hence,
		\[
		\left(
		\frac{t}{\lambda e_M^t-1}
		\right)^r
		e_M^{xt}
		\]
		has a zero of order \(r\) at \(t=0\). It follows that
		\[
		B^{(r)}_{k,M}(x;\lambda)=0,
		\qquad 0\leq k<r.
		\]
		After shifting the index,
		\[
		\frac{e_M^{xt}}{(1-\lambda e_M^t)^r}
		=
		(-1)^r
		\sum_{n=0}^{\infty}
		B^{(r)}_{n+r,M}(x;\lambda)
		\frac{t^n}{M_{n+r}!}.
		\]
		On the other hand, Theorem~\ref{ApostolMersBern} gives
		\[
		\frac{e_M^{xt}}{(1-\lambda e_M^t)^r}
		=
		\sum_{n=0}^{\infty}
		\Phi^{B,r}_{M,\lambda}(-n,x)
		\frac{t^n}{M_n!}.
		\]
		Comparing the coefficients of \(t^n\), we obtain
		\[
		\frac{\Phi^{B,r}_{M,\lambda}(-n,x)}{M_n!}
		=
		(-1)^r
		\frac{B^{(r)}_{n+r,M}(x;\lambda)}{M_{n+r}!}.
		\]
		Thus,
		\[
		\Phi^{B,r}_{M,\lambda}(-n,x)
		=
		(-1)^r
		\frac{M_n!}{M_{n+r}!}
		B^{(r)}_{n+r,M}(x;\lambda).
		\]
	\end{proof}
	
	The index shift \(n\mapsto n+r\) and the factor
	\[
	(-1)^r\frac{M_n!}{M_{n+r}!}
	\]
	relate the Bernoulli-type interpolation values to the
	Apostol-Mersenne-Bernoulli polynomials.
	
	\section{Euler-type Mersenne-Lerch interpolation}
	
	The alternating factor \((-1)^m\) gives the Euler-type interpolation family
	and leads to the denominator \(1+\lambda e_M^t\) in its generating function.
	
	\begin{definition}\label{EulerMersLerch}
		Let \(r\in\mathbb{N}\), \(n\in\mathbb{N}_0\),
		\(x\in\mathbb{C}\), and \(|\lambda|<1\). The Euler-type
		Mersenne-Lerch interpolation value of order \(r\) at the nonpositive
		integer \(-n\) is defined by
		\[
		\Phi^{E,r}_{M,\lambda}(-n,x)
		=
		\sum_{m=0}^{\infty}
		(-1)^m
		\binom{m+r-1}{m}
		\lambda^mP_{n,M}(x;m).
		\]
	\end{definition}
	By Theorem \ref{exp1}, for fixed \(n\), the polynomial
	\(P_{n,M}(x;m)\) has degree at most \(n\) in \(m\). Hence, the defining series converges for \(|\lambda|<1\). When
	\(r=1\), we write
	\[
	\Phi^{E}_{M,\lambda}(-n,x)
	=
	\Phi^{E,1}_{M,\lambda}(-n,x)
	=
	\sum_{m=0}^{\infty}
	(-1)^m\lambda^mP_{n,M}(x;m).
	\]
	
	The generating function follows from the defining relation for
	\(P_{n,M}(x;m)\) and the negative binomial expansion.
	
	\begin{theorem}\label{eulergen}
		Let \(r\in\mathbb{N}\), \(x\in\mathbb{C}\), and
		\(|\lambda|<1\). Then
		\[
		\sum_{n=0}^{\infty}
		\Phi^{E,r}_{M,\lambda}(-n,x)\frac{t^n}{M_n!}
		=
		\frac{e_M^{xt}}{(1+\lambda e_M^t)^r},
		\]
		for all \(t\) in a neighborhood of \(0\) satisfying
		\[
		|\lambda e_M^t|<1.
		\]
	\end{theorem}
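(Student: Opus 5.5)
The plan is to mirror the proof of Theorem~\ref{ApostolMersBern}, with \(\lambda\) replaced by \(-\lambda\). The only change is the alternating factor \((-1)^m\), which has modulus one and so does not affect any convergence estimate.

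First, I would write the left-hand side via Definition~\ref{EulerMersLerch} as a double series
\[
\sum_{n=0}^{\infty}\sum_{m=0}^{\infty}(-1)^m\binom{m+r-1}{m}\lambda^mP_{n,M}(x;m)\frac{t^n}{M_n!}.
\]
Next I would justify interchanging the two summations. Fix \(\delta>0\) with \(|\lambda|e_M^\delta<1\); this is possible since \(|\lambda|<1\) and \(e_M^\delta\to1\) as \(\delta\to0\). For \(|t|\le\delta\), the explicit M-multinomial formula for \(P_{n,M}(x;m)\) has nonnegative coefficients. Hence \(|P_{n,M}(x;m)|\le P_{n,M}(|x|;m)\), and therefore
\[
\sum_{n}|P_{n,M}(x;m)|\,\frac{|t|^n}{M_n!}\le e_M^{|x||t|}\bigl(e_M^{|t|}\bigr)^m .
\]
Multiplying by \(\binom{m+r-1}{m}|\lambda|^m\) and summing over \(m\) gives the bound
\[
e_M^{|x||t|}\bigl(1-|\lambda|e_M^{|t|}\bigr)^{-r}<\infty .
\]
So the double series converges absolutely, and Fubini for series lets us sum over \(n\) first.

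Summing over \(n\) first and using Definition~\ref{MersenneTransPol}, the inner sum becomes \(e_M^{xt}(e_M^t)^m\). This leaves
\[
e_M^{xt}\sum_{m=0}^{\infty}\binom{m+r-1}{m}\bigl(-\lambda e_M^t\bigr)^m .
\]
Applying the negative binomial expansion with \(z=-\lambda e_M^t\), which requires \(|z|=|\lambda e_M^t|<1\), gives \((1+\lambda e_M^t)^{-r}\), and the identity follows.

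The only nonroutine point is the interchange of summation, and it is handled exactly as in the Bernoulli case. For complex \(t\) the condition \(|\lambda e_M^t|<1\) is implied on \(|t|\le\delta\) by \(|e_M^t|\le e_M^{|t|}\), so the neighborhood stated in the theorem is covered.
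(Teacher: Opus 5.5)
Your proposal is correct and follows the paper's proof essentially step for step. You write the left side as a double series, justify the interchange with the bound $e_M^{|x||t|}\bigl(e_M^{|t|}\bigr)^m$ from the Bernoulli case, sum over $n$ via Definition~\ref{MersenneTransPol}, and apply the negative binomial expansion with $z=-\lambda e_M^t$. Your one addition is to derive $|P_{n,M}(x;m)|\le P_{n,M}(|x|;m)$ from the nonnegative M-multinomial coefficients, which justifies an estimate that the paper only asserts.
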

	
	\begin{proof}
		By Definition~\ref{EulerMersLerch},
		\[
		\sum_{n=0}^{\infty}
		\Phi^{E,r}_{M,\lambda}(-n,x)\frac{t^n}{M_n!}
		=
		\sum_{n=0}^{\infty}
		\left(
		\sum_{m=0}^{\infty}
		(-1)^m
		\binom{m+r-1}{m}
		\lambda^mP_{n,M}(x;m)
		\right)
		\frac{t^n}{M_n!}.
		\]
		The absolute-convergence estimate used in the proof of
		Theorem~\ref{ApostolMersBern} also applies here, so the order of
		summation may be changed. By Definition~\ref{MersenneTransPol},
		\begin{align*}
			\sum_{n=0}^{\infty}
			\Phi^{E,r}_{M,\lambda}(-n,x)\frac{t^n}{M_n!}
			&=
			\sum_{m=0}^{\infty}
			(-1)^m
			\binom{m+r-1}{m}
			\lambda^m
			\sum_{k=0}^{\infty}
			P_{k,M}(x;m)\frac{t^k}{M_k!}
			\\
			&=
			\sum_{m=0}^{\infty}
			(-1)^m
			\binom{m+r-1}{m}
			\lambda^m
			e_M^{xt}\left(e_M^t\right)^m
			\\
			&=
			e_M^{xt}
			\sum_{m=0}^{\infty}
			\binom{m+r-1}{m}
			\left(-\lambda e_M^t\right)^m.
		\end{align*}
		For \(|\lambda e_M^t|<1\), the negative binomial expansion gives
		\[
		\sum_{m=0}^{\infty}
		\binom{m+r-1}{m}
		\left(-\lambda e_M^t\right)^m
		=
		\frac{1}{(1+\lambda e_M^t)^r}.
		\]
		Hence,
		\[
		\sum_{n=0}^{\infty}
		\Phi^{E,r}_{M,\lambda}(-n,x)\frac{t^n}{M_n!}
		=
		\frac{e_M^{xt}}{(1+\lambda e_M^t)^r}.
		\]
	\end{proof}
	
	The factor \(2^r\) in the generating function of the
	Apostol-Mersenne-Euler polynomials gives the following interpolation
	formula.
	
	\begin{theorem}
		Let \(r\in\mathbb{N}\), \(n\in\mathbb{N}_0\),
		\(x\in\mathbb{C}\), and \(|\lambda|<1\). Then
		\[
		\Phi^{E,r}_{M,\lambda}(-n,x)
		=
		\frac{1}{2^r}
		E^{(r)}_{n,M}(x;\lambda),
		\]
		where \(E^{(r)}_{n,M}(x;\lambda)\) denotes the
		Apostol-Mersenne-Euler polynomial of order \(r\).
	\end{theorem}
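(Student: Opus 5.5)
The plan is to compare generating functions, exactly as in the Bernoulli-type case but without any index shift. By Theorem~\ref{eulergen}, for $t$ in a neighborhood of $0$ with $|\lambda e_M^t|<1$,
\[
\sum_{n=0}^{\infty}\Phi^{E,r}_{M,\lambda}(-n,x)\frac{t^n}{M_n!}
=\frac{e_M^{xt}}{(1+\lambda e_M^t)^r}.
\]
On the other side, \eqref{AposMerEu} gives
\[
\left(\frac{2}{\lambda e_M^t+1}\right)^r e_M^{xt}
=\sum_{n=0}^{\infty}E^{(r)}_{n,M}(x;\lambda)\frac{t^n}{M_n!}.
\]
Since $1+\lambda e_M^t=\lambda e_M^t+1$, the first right-hand side equals $2^{-r}$ times the second. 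Therefore
\[
\frac{e_M^{xt}}{(1+\lambda e_M^t)^r}
=\sum_{n=0}^{\infty}\frac{1}{2^r}E^{(r)}_{n,M}(x;\lambda)\frac{t^n}{M_n!}.
\]

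Next I will justify that both expansions are genuine power series around $t=0$, so that their coefficients can be compared. At $t=0$ we have $e_M^0=1$ and $1+\lambda\neq0$ because $|\lambda|<1$. Hence $(1+\lambda e_M^t)^{-r}e_M^{xt}$ is analytic in a neighborhood of $0$, and its Taylor series is the series in \eqref{AposMerEu} scaled by $2^{-r}$. The series from Theorem~\ref{eulergen} converges to the same function on a smaller neighborhood, namely where $|\lambda|e_M^{|t|}<1$. By uniqueness of power series expansions, the coefficients of $t^n$ coincide, which gives
\[
\frac{\Phi^{E,r}_{M,\lambda}(-n,x)}{M_n!}=\frac{1}{2^r}\frac{E^{(r)}_{n,M}(x;\lambda)}{M_n!}.
\]
Multiplying by $M_n!$ yields the claim.

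The only point needing care is this convergence and uniqueness step. Unlike the Bernoulli case, no vanishing of low-order coefficients is required, because there is no factor $t^r$. The nonvanishing of $1+\lambda$ ensures there is no pole at the origin, so the comparison is immediate.
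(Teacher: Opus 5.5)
Your proposal is correct and follows the paper's proof. Both write $e_M^{xt}/(1+\lambda e_M^t)^r$ as $2^{-r}$ times the generating function in \eqref{AposMerEu} and then compare coefficients of $t^n/M_n!$ with the expansion from Theorem~\ref{eulergen}. Your extra remark on analyticity near $t=0$ (using $1+\lambda\neq0$) and on uniqueness of power-series coefficients makes explicit a step the paper leaves implicit.
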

	
	\begin{proof}
		From \eqref{AposMerEu},
		\[
		\left(
		\frac{2}{\lambda e_M^t+1}
		\right)^r
		e_M^{xt}
		=
		\sum_{n=0}^{\infty}
		E^{(r)}_{n,M}(x;\lambda)
		\frac{t^n}{M_n!}.
		\]
		Therefore,
		\begin{align}
			\frac{e_M^{xt}}{(1+\lambda e_M^t)^r}
			&=
			\frac{1}{2^r}
			\left(
			\frac{2}{\lambda e_M^t+1}
			\right)^r
			e_M^{xt}
			\nonumber\\
			&=
			\frac{1}{2^r}
			\sum_{n=0}^{\infty}
			E^{(r)}_{n,M}(x;\lambda)
			\frac{t^n}{M_n!}
			\nonumber\\
			&=
			\sum_{n=0}^{\infty}
			\frac{1}{2^r}
			E^{(r)}_{n,M}(x;\lambda)
			\frac{t^n}{M_n!}.
			\label{eq1}
		\end{align}
		On the other hand, Theorem~\ref{eulergen} gives
		\begin{equation}\label{eq2}
			\frac{e_M^{xt}}{(1+\lambda e_M^t)^r}
			=
			\sum_{n=0}^{\infty}
			\Phi^{E,r}_{M,\lambda}(-n,x)
			\frac{t^n}{M_n!}.
		\end{equation}
		Comparing the coefficients of \(\displaystyle t^n/M_n!\) in
		\eqref{eq1} and \eqref{eq2}, we obtain
		\[
		\Phi^{E,r}_{M,\lambda}(-n,x)
		=
		\frac{1}{2^r}
		E^{(r)}_{n,M}(x;\lambda).
		\]
	\end{proof}
	
	The factor \(2^{-r}\) relates the Euler-type interpolation values at
	nonpositive integers to the Apostol-Mersenne-Euler polynomials of order
	\(r\).
	
	\section{Derivative, integral, addition, and difference formulas}
	
	Suna and Ray obtained related derivative, integral, addition, and difference
	identities for Mersenne-Bernoulli, Mersenne-Euler, and Apostol-type
	Mersenne polynomial families
	\cite{SunaRay2026,SunaRay2026Mersenne}. The formulas in this section concern
	the Bernoulli-type and Euler-type interpolation values of order
	\(r\in\mathbb{N}\), with \(|\lambda|<1\), and follow directly from the
	generating functions established in the preceding sections.
		
	\begin{theorem}
		Let \(r\in\mathbb{N}\), \(x\in\mathbb{C}\), and
		\(|\lambda|<1\). For \(n\geq1\),
		\begin{equation}\label{derivative1}
			D_x\Phi^{B,r}_{M,\lambda}(-n,x)
			=
			M_n\Phi^{B,r}_{M,\lambda}(-(n-1),x),
		\end{equation}
		and
		\begin{equation}\label{derivative2}
			D_x\Phi^{E,r}_{M,\lambda}(-n,x)
			=
			M_n\Phi^{E,r}_{M,\lambda}(-(n-1),x).
		\end{equation}
		When \(n=0\),
		\begin{equation}\label{br0}
			D_x\Phi^{B,r}_{M,\lambda}(0,x)=0,
		\end{equation}
		and
		\begin{equation}\label{er0}
			D_x\Phi^{E,r}_{M,\lambda}(0,x)=0.
		\end{equation}
	\end{theorem}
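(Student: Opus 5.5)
The plan is to apply \(D_x\) to the generating functions of Theorem~\ref{ApostolMersBern} and Theorem~\ref{eulergen}, and then compare coefficients of \(t^n/M_n!\). This mirrors the proof of the lowering relation for \(P_{n,M}(x;m)\).

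The first step is to recall the identity \(D_xe_M^{xt}=te_M^{xt}\), established in the proof of the derivative theorem for \(P_{n,M}(x;m)\). The factor \((1\mp\lambda e_M^t)^{-r}\) does not depend on \(x\), so it is a constant for \(D_x\). Since \(D_x\) is linear and acts only on the variable \(x\), we get
\[
D_x\frac{e_M^{xt}}{(1-\lambda e_M^t)^r}
=
t\,\frac{e_M^{xt}}{(1-\lambda e_M^t)^r}
=
\sum_{n=1}^{\infty}M_n\Phi^{B,r}_{M,\lambda}(-(n-1),x)\frac{t^n}{M_n!}.
\]
The last equality uses \(t\cdot t^{n-1}/M_{n-1}!=M_n\,t^n/M_n!\). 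On the left side, \(D_x\) is applied termwise to \(\sum_n\Phi^{B,r}_{M,\lambda}(-n,x)t^n/M_n!\). Comparing coefficients gives \eqref{derivative1} for \(n\ge1\), and \eqref{br0} follows because the right side has no \(t^0\) term. The Euler case, giving \eqref{derivative2} and \eqref{er0}, is identical with \(1-\lambda e_M^t\) replaced by \(1+\lambda e_M^t\).

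The main obstacle is justifying termwise application of \(D_x\). Because \(D_x\) is a difference quotient, \(\bigl(f(2x)-f(x)\bigr)/x\), it is linear and commutes with any convergent series for \(x\neq0\). No interchange of a limit with differentiation is needed, only linearity applied to two convergent series, one evaluated at \(2x\) and one at \(x\). At \(x=0\) the value is defined as a limit. I would handle it by noting that each \(\Phi^{B,r}_{M,\lambda}(-n,x)\) is a polynomial in \(x\), via Theorem~\ref{exp1} or the relation to \(B^{(r)}_{n+r,M}\), so the identity extends to \(x=0\) by continuity.

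As an alternative that avoids generating functions, one can apply \(D_x\) termwise in \(m\) to the defining series in Definition~\ref{BernMersenneLerchInt} and use \(D_xP_{n,M}(x;m)=M_nP_{n-1,M}(x;m)\) directly. The same linearity argument applies here, since both series converge absolutely for \(|\lambda|<1\).
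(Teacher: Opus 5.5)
Your proposal is correct and follows the paper's proof. Like the paper, you apply \(D_x\) to the two generating functions using \(D_xe_M^{xt}=te_M^{xt}\), shift the index via \(M_n!/M_{n-1}!=M_n\), and compare coefficients of \(t^n/M_n!\), with the \(n=0\) case following from the absence of a constant term. Your added remarks — termwise application of the difference quotient for \(x\neq0\), and extension to \(x=0\) by continuity since the values are polynomials in \(x\) — supply details the paper leaves implicit.
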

	
	\begin{proof}
		By Theorem~\ref{ApostolMersBern},
		\[
		\sum_{n=0}^{\infty}
		\Phi^{B,r}_{M,\lambda}(-n,x)\frac{t^n}{M_n!}
		=
		\frac{e_M^{xt}}{(1-\lambda e_M^t)^r}.
		\]
		Applying \(D_x\) and using \(D_xe_M^{xt}=te_M^{xt}\), we obtain
		\begin{align*}
			\sum_{n=0}^{\infty}
			D_x\Phi^{B,r}_{M,\lambda}(-n,x)\frac{t^n}{M_n!}
			&=
			D_x\left(
			\frac{e_M^{xt}}{(1-\lambda e_M^t)^r}
			\right)
			\\
			&=
			t\frac{e_M^{xt}}{(1-\lambda e_M^t)^r}
			\\
			&=
			t\sum_{n=0}^{\infty}
			\Phi^{B,r}_{M,\lambda}(-n,x)\frac{t^n}{M_n!}
			\\
			&=
			\sum_{n=1}^{\infty}
			M_n\Phi^{B,r}_{M,\lambda}(-(n-1),x)
			\frac{t^n}{M_n!}.
		\end{align*}
		For \(n\geq1\), comparison of the coefficients of
		\(\displaystyle t^n/M_n!\) gives \eqref{derivative1}. The series on the
		right has no constant term, so \eqref{br0} follows.
		
		For the Euler-type interpolation values, Theorem~\ref{eulergen} gives
		\[
		\sum_{n=0}^{\infty}
		\Phi^{E,r}_{M,\lambda}(-n,x)\frac{t^n}{M_n!}
		=
		\frac{e_M^{xt}}{(1+\lambda e_M^t)^r}.
		\]
		Applying \(D_x\) gives
		\[
		\sum_{n=0}^{\infty}
		D_x\Phi^{E,r}_{M,\lambda}(-n,x)\frac{t^n}{M_n!}
		=
		\sum_{n=1}^{\infty}
		M_n\Phi^{E,r}_{M,\lambda}(-(n-1),x)
		\frac{t^n}{M_n!}.
		\]
		Comparing coefficients gives \eqref{derivative2}, while the absence of a
		constant term gives \eqref{er0}.
	\end{proof}
	
	The derivative relations give the corresponding M-integral formulas.
	
	\begin{theorem}
		Let \(r\in\mathbb{N}\), \(n\in\mathbb{N}_0\),
		\(a,b\in\mathbb{R}\), and \(|\lambda|<1\). Then
		\begin{equation}\label{integral1}
			\int_a^b
			\Phi^{B,r}_{M,\lambda}(-n,x)\,d_Mx
			=
			\frac{
				\Phi^{B,r}_{M,\lambda}(-(n+1),b)
				-
				\Phi^{B,r}_{M,\lambda}(-(n+1),a)
			}{M_{n+1}},
		\end{equation}
		and
		\begin{equation}\label{integral2}
			\int_a^b
			\Phi^{E,r}_{M,\lambda}(-n,x)\,d_Mx
			=
			\frac{
				\Phi^{E,r}_{M,\lambda}(-(n+1),b)
				-
				\Phi^{E,r}_{M,\lambda}(-(n+1),a)
			}{M_{n+1}}.
		\end{equation}
	\end{theorem}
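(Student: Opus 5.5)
The plan is to apply the previous theorem (the derivative formulas \eqref{derivative1} and \eqref{derivative2}) with \(n\) replaced by \(n+1\), and then use the definition \eqref{Mintegral} of the M-integral as the inverse of \(D_x\).

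\emph{Step 1: an M-antiderivative.} Since \(n+1\geq1\), \eqref{derivative1} applies and gives
\[
D_x\Phi^{B,r}_{M,\lambda}(-(n+1),x)=M_{n+1}\Phi^{B,r}_{M,\lambda}(-n,x).
\]
Because \(M_{n+1}=2^{n+1}-1\geq1\) is nonzero and \(D_x\) is linear, the function
\[
F(x)=\frac{\Phi^{B,r}_{M,\lambda}(-(n+1),x)}{M_{n+1}}
\]
satisfies \(D_xF(x)=\Phi^{B,r}_{M,\lambda}(-n,x)\).

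\emph{Step 2: evaluate the integral.} By \eqref{Mintegral}, \(\int_a^b \Phi^{B,r}_{M,\lambda}(-n,x)\,d_Mx=F(b)-F(a)\), which is exactly \eqref{integral1}. The Euler-type formula \eqref{integral2} follows in the same way from \eqref{derivative2}.

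\emph{Main obstacle: well-definedness.} The only delicate point is whether the M-integral is independent of the chosen antiderivative. If \(D_xG=0\), then \(G(2x)=G(x)\) for \(x\neq0\), and such a \(G\) need not be constant. I would handle this by noting that \(\Phi^{B,r}_{M,\lambda}(-(n+1),x)\) is a polynomial in \(x\). Indeed, Theorem~\ref{exp1} writes \(P_{n+1,M}(x;m)\) as \(\sum_\ell \binom{n+1}{\ell}_M x^\ell c_\ell(m)\), where each \(c_\ell(m)\) is a polynomial in \(m\) of degree at most \(n+1\). Hence the series \(\sum_m\binom{m+r-1}{m}\lambda^m c_\ell(m)\) converges for \(|\lambda|<1\), and the sum over \(\ell\) is finite, so the value is a polynomial in \(x\).

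Within polynomials, \(D_x\) sends \(x^k\) to \(M_kx^{k-1}\) with \(M_k\neq0\) for \(k\geq1\). So a polynomial \(G\) with \(D_xG=0\) is constant, and its contribution cancels in \(F(b)-F(a)\). This matches the monomial convention \(\int_0^1x^n\,d_Mx=1/M_{n+1}\) stated earlier. With this convention the identities hold, and the Euler case uses the identical argument.
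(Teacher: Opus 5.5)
Your proposal is correct and follows the paper's proof: replace $n$ by $n+1$ in the derivative formulas, divide by $M_{n+1}\neq 0$ to obtain an M-antiderivative, and apply \eqref{Mintegral}, with the Euler case handled identically. Your extra argument, that $\Phi^{B,r}_{M,\lambda}(-(n+1),x)$ is a polynomial in $x$ by Theorem~\ref{exp1} and so its polynomial M-antiderivatives differ only by constants, settles a well-definedness point that the paper's proof does not address.
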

	
	\begin{proof}
		Replacing \(n\) by \(n+1\) in \eqref{derivative1}, we have
		\[
		D_x\Phi^{B,r}_{M,\lambda}(-(n+1),x)
		=
		M_{n+1}\Phi^{B,r}_{M,\lambda}(-n,x).
		\]
		Thus,
		\[
		D_x\left(
		\frac{1}{M_{n+1}}
		\Phi^{B,r}_{M,\lambda}(-(n+1),x)
		\right)
		=
		\Phi^{B,r}_{M,\lambda}(-n,x).
		\]
		Using \eqref{Mintegral} gives \eqref{integral1}.
		
		In the same way, \eqref{derivative2} gives
		\[
		D_x\Phi^{E,r}_{M,\lambda}(-(n+1),x)
		=
		M_{n+1}\Phi^{E,r}_{M,\lambda}(-n,x),
		\]
		and \eqref{Mintegral} gives \eqref{integral2}.
	\end{proof}
	
	The M-binomial addition produces the addition formulas for the Bernoulli-type
	and Euler-type interpolation values.
	
	\begin{theorem}\label{addition}
		Let \(r\in\mathbb{N}\), \(n\in\mathbb{N}_0\),
		\(x,y\in\mathbb{C}\), and \(|\lambda|<1\). Then
		\begin{equation}\label{addition1}
			\Phi^{B,r}_{M,\lambda}(-n,x+_M y)
			=
			\sum_{k=0}^{n}
			\binom{n}{k}_M
			\Phi^{B,r}_{M,\lambda}(-k,x)y^{n-k},
		\end{equation}
		and
		\begin{equation}\label{addition2}
			\Phi^{E,r}_{M,\lambda}(-n,x+_M y)
			=
			\sum_{k=0}^{n}
			\binom{n}{k}_M
			\Phi^{E,r}_{M,\lambda}(-k,x)y^{n-k}.
		\end{equation}
	\end{theorem}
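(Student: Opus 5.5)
The plan is to argue entirely at the level of generating functions, using Theorem~\ref{ApostolMersBern} and Theorem~\ref{eulergen}, and to read the identity \(e_M^{xt}e_M^{yt}=e_M^{(x+_M y)t}\) as the defining property of the M-binomial addition. Concretely, \(\Phi^{B,r}_{M,\lambda}(-n,x+_M y)\) means the polynomial \(\Phi^{B,r}_{M,\lambda}(-n,x)\) (a polynomial in \(x\) by Theorem~\ref{exp1}) with each power \(x^j\) replaced by \((x+_M y)^j\). Equivalently, it is the coefficient of \(t^n/M_n!\) in
\[
\frac{e_M^{(x+_M y)t}}{(1-\lambda e_M^t)^r}.
\]

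For the Bernoulli-type formula \eqref{addition1}, I will write
\[
\frac{e_M^{(x+_M y)t}}{(1-\lambda e_M^t)^r}
=
\frac{e_M^{xt}}{(1-\lambda e_M^t)^r}\,e_M^{yt}
=
\left(\sum_{k=0}^{\infty}\Phi^{B,r}_{M,\lambda}(-k,x)\frac{t^k}{M_k!}\right)
\left(\sum_{j=0}^{\infty}\frac{y^jt^j}{M_j!}\right).
\]
Forming the Cauchy product, the coefficient of \(t^n\) is \(\sum_{k=0}^{n}\Phi^{B,r}_{M,\lambda}(-k,x)y^{n-k}/(M_k!M_{n-k}!)\). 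Multiplying by \(M_n!\) and using the definition \eqref{Mbinomialcoeffi} of the M-binomial coefficient gives \eqref{addition1} after comparing coefficients of \(t^n/M_n!\).

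Both series converge absolutely for \(t\) in the neighborhood of \(0\) where \(|\lambda e_M^t|<1\), as in the proof of Theorem~\ref{ApostolMersBern}, and \(e_M^{yt}\) is entire in \(t\). The product rearrangement and the coefficient comparison are therefore legitimate. Alternatively, one can work purely with formal power series in \(t\), since every quantity involved is a coefficient identity.

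The Euler-type formula \eqref{addition2} is handled identically with \((1+\lambda e_M^t)^r\) and Theorem~\ref{eulergen}. The only delicate point is justifying that substituting \(x+_M y\) into the generating function corresponds to multiplying by \(e_M^{yt}\). Termwise, this reduces to \(e_M^{(x+_M y)t}=\sum_j (x+_M y)^j t^j/M_j!\), which is exactly the stated consequence of the M-binomial addition. I expect this to be the main obstacle, but it is a matter of interpretation rather than computation.
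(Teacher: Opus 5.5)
Your proposal is correct and follows the paper's proof: both write $e_M^{(x+_M y)t}/(1\mp\lambda e_M^t)^r$ as the generating function from Theorem~\ref{ApostolMersBern} (resp.\ Theorem~\ref{eulergen}) multiplied by $e_M^{yt}$, form the Cauchy product, and compare coefficients of $t^n/M_n!$ using the definition of $\binom{n}{k}_M$. You also say explicitly that $\Phi^{B,r}_{M,\lambda}(-n,x+_M y)$ is read umbrally, with each $x^j$ replaced by $(x+_M y)^j$, which the paper leaves implicit, and your convergence remark agrees with the estimate in the proof of Theorem~\ref{ApostolMersBern}.
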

	
	\begin{proof}
		Using
		\[
		e_M^{(x+_M y)t}
		=
		e_M^{xt}e_M^{yt},
		\]
		together with Theorem~\ref{ApostolMersBern} and
		\eqref{Mexponentialcoeffi}, we obtain
		\begin{align*}
			\sum_{n=0}^{\infty}
			\Phi^{B,r}_{M,\lambda}(-n,x+_M y)
			\frac{t^n}{M_n!}
			&=
			\frac{e_M^{(x+_M y)t}}
			{(1-\lambda e_M^t)^r}
			\\
			&=
			\frac{e_M^{xt}}
			{(1-\lambda e_M^t)^r}
			e_M^{yt}
			\\
			&=
			\left(
			\sum_{k=0}^{\infty}
			\Phi^{B,r}_{M,\lambda}(-k,x)
			\frac{t^k}{M_k!}
			\right)
			\left(
			\sum_{j=0}^{\infty}
			y^j\frac{t^j}{M_j!}
			\right)
			\\
			&=
			\sum_{n=0}^{\infty}
			\left(
			\sum_{k=0}^{n}
			\frac{
				\Phi^{B,r}_{M,\lambda}(-k,x)y^{n-k}
			}{
				M_k!M_{n-k}!
			}
			\right)t^n
			\\
			&=
			\sum_{n=0}^{\infty}
			\left(
			\sum_{k=0}^{n}
			\binom{n}{k}_M
			\Phi^{B,r}_{M,\lambda}(-k,x)y^{n-k}
			\right)
			\frac{t^n}{M_n!}.
		\end{align*}
		Comparing coefficients gives \eqref{addition1}.
		
		For the Euler-type interpolation values, Theorem~\ref{eulergen} gives
		\[
		\sum_{n=0}^{\infty}
		\Phi^{E,r}_{M,\lambda}(-n,x+_M y)
		\frac{t^n}{M_n!}
		=
		\frac{e_M^{xt}}
		{(1+\lambda e_M^t)^r}
		e_M^{yt}.
		\]
		The same Cauchy product gives \eqref{addition2}.
	\end{proof}
	
	For the difference formulas, define the order-zero values by
	\begin{equation}\label{convention}
		\Phi^{B,0}_{M,\lambda}(-n,x)
		=
		\Phi^{E,0}_{M,\lambda}(-n,x)
		=
		x^n,
		\qquad n\in\mathbb{N}_0.
	\end{equation}
	
	\begin{theorem}\label{difference}
		Let \(r\in\mathbb{N}\), \(n\in\mathbb{N}_0\),
		\(x\in\mathbb{C}\), and \(|\lambda|<1\). Then
		\begin{equation}\label{difference1}
			\lambda\Phi^{B,r}_{M,\lambda}(-n,x+_M1)
			-
			\Phi^{B,r}_{M,\lambda}(-n,x)
			=
			-\Phi^{B,r-1}_{M,\lambda}(-n,x),
		\end{equation}
		and
		\begin{equation}\label{difference2}
			\lambda\Phi^{E,r}_{M,\lambda}(-n,x+_M1)
			+
			\Phi^{E,r}_{M,\lambda}(-n,x)
			=
			\Phi^{E,r-1}_{M,\lambda}(-n,x).
		\end{equation}
		In particular, when \(r=1\),
		\begin{equation}\label{br1}
			\lambda\Phi^{B}_{M,\lambda}(-n,x+_M1)
			-
			\Phi^{B}_{M,\lambda}(-n,x)
			=
			-x^n,
		\end{equation}
		and
		\begin{equation}\label{er11}
			\lambda\Phi^{E}_{M,\lambda}(-n,x+_M1)
			+
			\Phi^{E}_{M,\lambda}(-n,x)
			=
			x^n.
		\end{equation}
	\end{theorem}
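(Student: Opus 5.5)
The plan is to work entirely at the level of generating functions, using Theorem~\ref{ApostolMersBern}, Theorem~\ref{eulergen} and the identity \(e_M^{(x+_M1)t}=e_M^{xt}e_M^{t}\). By Theorem~\ref{addition} with \(y=1\), or directly from that product rule, the generating function of \(\Phi^{B,r}_{M,\lambda}(-n,x+_M1)\) is
\[
\frac{e_M^{xt}e_M^t}{(1-\lambda e_M^t)^r}.
\]
Multiplying by \(\lambda\) and subtracting the generating function of \(\Phi^{B,r}_{M,\lambda}(-n,x)\) gives
\[
\frac{e_M^{xt}\left(\lambda e_M^t-1\right)}{(1-\lambda e_M^t)^r}
=
-\frac{e_M^{xt}}{(1-\lambda e_M^t)^{r-1}} .
\]

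For \(r\geq2\), Theorem~\ref{ApostolMersBern} applied with order \(r-1\) identifies the right-hand side with
\[
-\sum_{n}\Phi^{B,r-1}_{M,\lambda}(-n,x)\frac{t^n}{M_n!}.
\]
For \(r=1\) the right-hand side is \(-e_M^{xt}=-\sum_n x^n t^n/M_n!\), which matches the convention \eqref{convention}. In both cases, comparing coefficients of \(t^n/M_n!\) gives \eqref{difference1}, and taking \(r=1\) gives \eqref{br1}.

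The Euler case is identical, using Theorem~\ref{eulergen}. Here
\[
\lambda e_M^{t}+1=1+\lambda e_M^t,
\]
so the combination \(\lambda\Phi^{E,r}(x+_M1)+\Phi^{E,r}(x)\) has generating function
\[
\frac{e_M^{xt}}{(1+\lambda e_M^t)^{r-1}}.
\]
This gives \eqref{difference2}, and \eqref{er11} is its specialisation to \(r=1\).

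The only real point of care is that all identities hold as convergent power series in a common neighbourhood \(|t|\le\delta\) with \(|\lambda|e_M^\delta<1\), which is exactly the domain used in Theorem~\ref{ApostolMersBern}. Within that neighbourhood coefficient comparison is legitimate. The \(r=1\) boundary case is covered by the convention \eqref{convention}; no other obstacle is expected.

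Alternatively, one could verify the Bernoulli case directly from the series in Definition~\ref{BernMersenneLerchInt}. This uses \(P_{n,M}(x+_M1;m)=P_{n,M}(x;m+1)\), which holds because the generating functions agree. One then shifts \(m\mapsto m-1\) and applies Pascal's identity
\[
\binom{m+r-1}{m}-\binom{m+r-2}{m-1}=\binom{m+r-2}{m}.
\]
The generating function route, however, is shorter.
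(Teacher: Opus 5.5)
Your proposal is correct and follows the paper's proof essentially step for step: the generating functions from Theorems~\ref{ApostolMersBern} and~\ref{eulergen} combined with $e_M^{(x+_M1)t}=e_M^{xt}e_M^t$, cancellation of one factor of $1\mp\lambda e_M^t$, and the case $r=1$ handled through the order-zero convention \eqref{convention}. Your alternative series-level argument, via $P_{n,M}(x+_M1;m)=P_{n,M}(x;m+1)$, the shift $m\mapsto m-1$, and Pascal's identity, is also valid, although the paper does not use it.
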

	
	\begin{proof}
		By Theorem~\ref{ApostolMersBern} and
		\[
		e_M^{(x+_M1)t}
		=
		e_M^{xt}e_M^t,
		\]
		we have
		\[
		\sum_{n=0}^{\infty}
		\Phi^{B,r}_{M,\lambda}(-n,x+_M1)
		\frac{t^n}{M_n!}
		=
		\frac{e_M^{xt}e_M^t}
		{(1-\lambda e_M^t)^r}.
		\]
		Hence,
		\begin{align*}
			&\sum_{n=0}^{\infty}
			\left[
			\lambda\Phi^{B,r}_{M,\lambda}(-n,x+_M1)
			-
			\Phi^{B,r}_{M,\lambda}(-n,x)
			\right]
			\frac{t^n}{M_n!}
			\\
			&\qquad=
			\lambda
			\frac{e_M^{xt}e_M^t}
			{(1-\lambda e_M^t)^r}
			-
			\frac{e_M^{xt}}
			{(1-\lambda e_M^t)^r}
			\\
			&\qquad=
			\frac{e_M^{xt}(\lambda e_M^t-1)}
			{(1-\lambda e_M^t)^r}
			\\
			&\qquad=
			-\frac{e_M^{xt}}
			{(1-\lambda e_M^t)^{r-1}}.
		\end{align*}
		For \(r\geq2\), Theorem~\ref{ApostolMersBern} gives
		\[
		-\frac{e_M^{xt}}
		{(1-\lambda e_M^t)^{r-1}}
		=
		-\sum_{n=0}^{\infty}
		\Phi^{B,r-1}_{M,\lambda}(-n,x)
		\frac{t^n}{M_n!}.
		\]
		When \(r=1\), the expression reduces to
		\[
		-e_M^{xt}
		=
		-\sum_{n=0}^{\infty}
		x^n\frac{t^n}{M_n!},
		\]
		which agrees with \eqref{convention}. Comparing coefficients gives
		\eqref{difference1} and \eqref{br1}.
		
		For the Euler-type interpolation values, Theorem~\ref{eulergen} gives
		\[
		\sum_{n=0}^{\infty}
		\Phi^{E,r}_{M,\lambda}(-n,x+_M1)
		\frac{t^n}{M_n!}
		=
		\frac{e_M^{xt}e_M^t}
		{(1+\lambda e_M^t)^r}.
		\]
		Therefore,
		\begin{align*}
			&\sum_{n=0}^{\infty}
			\left[
			\lambda\Phi^{E,r}_{M,\lambda}(-n,x+_M1)
			+
			\Phi^{E,r}_{M,\lambda}(-n,x)
			\right]
			\frac{t^n}{M_n!}
			\\
			&\qquad=
			\lambda
			\frac{e_M^{xt}e_M^t}
			{(1+\lambda e_M^t)^r}
			+
			\frac{e_M^{xt}}
			{(1+\lambda e_M^t)^r}
			\\
			&\qquad=
			\frac{e_M^{xt}(\lambda e_M^t+1)}
			{(1+\lambda e_M^t)^r}
			\\
			&\qquad=
			\frac{e_M^{xt}}
			{(1+\lambda e_M^t)^{r-1}}.
		\end{align*}
		For \(r\geq2\),
		\[
		\frac{e_M^{xt}}
		{(1+\lambda e_M^t)^{r-1}}
		=
		\sum_{n=0}^{\infty}
		\Phi^{E,r-1}_{M,\lambda}(-n,x)
		\frac{t^n}{M_n!}.
		\]
		When \(r=1\),
		\[
		e_M^{xt}
		=
		\sum_{n=0}^{\infty}
		x^n\frac{t^n}{M_n!}.
		\]
		Comparing coefficients gives \eqref{difference2} and \eqref{er11}.
	\end{proof}
	
	\section{Comparison with the \(q=2\) case}
	
	The Mersenne quantities used in this paper arise from the usual
	\(q\)-calculus expressions when \(q=2\). The \(q\)-integer,
	\(q\)-factorial, and \(q\)-binomial coefficient are defined by
	\cite{KacCheung2002}
	\[
	[n]_q
	=
	\frac{q^n-1}{q-1},
	\]
	\[
	[n]_q!
	=
	[n]_q[n-1]_q\cdots[1]_q,
	\qquad
	[0]_q!=1,
	\]
	and
	\[
	\binom{n}{k}_q
	=
	\frac{[n]_q!}{[k]_q![n-k]_q!}.
	\]
	Setting \(q=2\) gives
	\[
	[n]_2
	=
	2^n-1
	=
	M_n.
	\]
	Consequently,
	\[
	[n]_2!=M_n!,
	\]
	and
	\begin{align*}
		\left.
		\binom{n}{k}_q
		\right|_{q=2}
		&=
		\frac{[n]_2!}{[k]_2![n-k]_2!}
		\\
		&=
		\frac{M_n!}{M_k!M_{n-k}!}
		\\
		&=
		\binom{n}{k}_M.
	\end{align*}
	
	The M-derivative is obtained from the Jackson \(q\)-derivative in the same
	way. For \(q\neq1\), the Jackson derivative is
	\[
	D_{q,x}f(x)
	=
	\frac{f(qx)-f(x)}{(q-1)x}.
	\]
	Its value at \(q=2\) is
	\[
	D_{2,x}f(x)
	=
	\frac{f(2x)-f(x)}{x}
	=
	D_xf(x).
	\]
	For the monomial \(x^n\),
	\[
	D_{2,x}(x^n)
	=
	[n]_2x^{n-1}
	=
	M_nx^{n-1}
	=
	D_x(x^n).
	\]
	
	Consider the factorial-based \(q\)-exponential
	\[
	e_q(z)
	=
	\sum_{n=0}^{\infty}
	\frac{z^n}{[n]_q!}.
	\]
	Its specialization at \(q=2\) satisfies
	\begin{align*}
		e_2(xt)
		&=
		\sum_{n=0}^{\infty}
		\frac{x^nt^n}{[n]_2!}
		\\
		&=
		\sum_{n=0}^{\infty}
		\frac{x^nt^n}{M_n!}
		\\
		&=
		e_M^{xt}.
	\end{align*}
	
	Thus, the M-factorial, M-binomial coefficient, M-derivative, and
	M-exponential are the \(q=2\) forms of their corresponding \(q\)-calculus
	expressions. The Mersenne notation is retained because the two interpolation
	families are associated with the Apostol-Mersenne polynomial families. In
	this notation, the special values are
	\[
	\Phi^{B,r}_{M,\lambda}(-n,x)
	=
	(-1)^r
	\frac{M_n!}{M_{n+r}!}
	B^{(r)}_{n+r,M}(x;\lambda),
	\]
	and
	\[
	\Phi^{E,r}_{M,\lambda}(-n,x)
	=
	\frac{1}{2^r}
	E^{(r)}_{n,M}(x;\lambda).
	\]
	These are the \(q=2\) interpolation relations for the
	Apostol-Mersenne-Bernoulli and Apostol-Mersenne-Euler polynomials.

\section{Conclusion}

Bernoulli-type and Euler-type Mersenne-Lerch interpolation families were
constructed from the Mersenne translation polynomials \(P_{n,M}(x;m)\),
defined by
\[
e_M^{xt}\left(e_M^t\right)^m
=
\sum_{n=0}^{\infty}
P_{n,M}(x;m)\frac{t^n}{M_n!}.
\]
These polynomials have an explicit M-multinomial formula and an expansion in
terms of M-Stirling numbers of the second kind.

The generating functions of the two interpolation families yield
\[
\Phi^{B,r}_{M,\lambda}(-n,x)
=
(-1)^r
\frac{M_n!}{M_{n+r}!}
B^{(r)}_{n+r,M}(x;\lambda),
\]
and
\[
\Phi^{E,r}_{M,\lambda}(-n,x)
=
\frac{1}{2^r}
E^{(r)}_{n,M}(x;\lambda).
\]
Hence, the Apostol-Mersenne-Bernoulli and
Apostol-Mersenne-Euler polynomials are obtained from the interpolation
values at nonpositive integers. The same generating functions give the
derivative, integral, addition, and difference formulas for both families.

The comparison with \(q\)-calculus shows that the M-factorial,
M-binomial coefficient, M-derivative, and M-exponential are the \(q=2\)
forms of the corresponding \(q\)-quantities. The Mersenne notation keeps
the interpolation formulas connected with the Apostol-Mersenne polynomial
families.

A remaining question is whether functions of a complex variable \(s\) can be
constructed to interpolate these values, and how such functions relate to
Mersenne-type Hurwitz-Lerch factorial zeta functions involving \(M_n!\).

\end{document}